\newcommand{\A}{\mathcal{A}}
\newcommand{\F}[2][\empty]{\mathcal{F}_{#1}(#2)}
\newcommand{\C}[1]{\mathcal{C}(#1)}
\newcommand{\W}{\mathcal{W}}
\newcommand{\pair}[1]{\langle#1\rangle}
\newcommand{\lex}{<_{lex}}
\newcommand{\I}{\mathcal{I}}
\newcommand{\cl}[2][X]{\mathrm{cl}_{#1}\!\left(#2\right)}
\newtheoremstyle{theorem}
     {11pt}
     {11pt}
     {}
     {}
     {\bfseries}
     {}
     {.5em}
     {\noindent\thmnumber{#2}. \thmname{#1}\thmnote{#3}}
\theoremstyle{theorem}
\newtheorem{thm}{Theorem}[section]
\newtheorem{lemma}[thm]{Lemma}
\newtheorem{remark}[thm]{Remark}
\newtheorem{ques}[thm]{Question}
\newtheorem{defi}[thm]{Definition}
\newtheorem{ex}[thm]{Example}
\newtheorem{claim}[thm]{Claim}
\newtheorem{fact}[thm]{Fact}
\title{A Whitney map onto the long arc}
\author{Rodrigo Hern\'andez-Guti\'errez}
\address{Department of Mathematics and Statistics, York University, Toronto, ON M3J 1P3, Canada}	
	\email{rodhdz@yorku.ca}
\date{\today}
\subjclass[2010]{54F15, 54F05, 54F50, 54B35}
\keywords{Whitney map, long arc, non-metrizable continua, pseudoarc}
\begin{document}

\begin{abstract}
In a recent paper, Garc\'ia-Velazquez has extended the notion of Whitney map to include maps with non-metrizable codomain and left open the question of whether there is a continuum that admits such a Whitney map. In this paper, we consider two examples of hereditarily indecomposable, chainable continua of weight $\omega_1$ constructed by Michel Smith; we show that one of them admits a Whitney function onto the long arc and the other admits no Whitney maps at all.
\end{abstract}

\maketitle

\section{Introduction}

All spaces in this note are assumed to be Tychonoff. For a continuum $X$, let $\C{X}$ be the hyperspace of subcontinua of $X$ with the Vietoris topology.

For metrizable continua, there exists the well-known notion of a Whitney map which, roughly speaking, gives the size of a closed subset relative to others. In this way, closed subsets of the same size are contained in a so-called Whitney level. Whitney maps and Whitney levels are widely used in the theory of metrizable continua, for details, see the book \cite{hyp-ill-nad}. 

The paper \cite{luismiguel-whitney} extends the definition of Whitney level (for $\C{X}$) so that it does not depend on Whitney maps and can be given on arbitrary continua (that is, continua that are not necessarily metrizable). The author of that paper studies whether some well-known non-metrizable continua have Whitney levels or not. He also defines Whitney maps for arbitrary continua and show that the examples he considers do not admit such generalized Whitney maps.

Let us give the definition of Whitney maps in its general form. An arc is a continuum $J$ with its topology given by a strict linear order relation $\lhd$ (equivalently, a continuum with only two non-cut points, but we will focus on the order relation). Contrasting with the case of the unit interval $[0,1]$, in general the reverse order of $\lhd$ may not be order-isomorphic to $\lhd$. So in this paper an arc will be a pair $\pair{J,\lhd}$ that consists on the topological space $J$ with the strict order relation $\lhd$ giving the topology.

\begin{defi}
If $X$ is a continuum, a Whitney map for $\C{X}$ is a continuous function $\mu:\C{X}\to \pair{J,\lhd}$, where $\pair{J,\lhd}$ is an arc and the following conditions hold: (a) $\mu(\{x\})=\min J$ for each $x\in X$; (b) $\mu(A)\lhd\mu(B)$ whenever $A,B\in\C{X}$ and $A\subsetneq B$; and (c) $\mu(X)=\max J$.
\end{defi}

Stone has constructed in \cite{stone-phd} a continuum of uncountable weight that admits a Whitney map to $[0,1]$. Thus it remains to show whether there exists a continuum that admits a Whitney map to an arc different from $[0,1]$; this was asked in \cite{luismiguel-whitney}. The purpose of this paper is to solve that question in the affirmative. 

We shall use an example of ``non-metric pseudoarc'' constructed by Michel Smith in \cite{smith2}. Recall that the \emph{long arc} is the space $L=(\omega_1\times[0,1))\cup\{\pair{\omega_1,0}\}$ with the topology given by the lexicographic ordering $\lex$. 

\begin{ex}\label{exyes}
There exists a chainable, hereditarily indecomposable continuum $X$  of weight $\omega_1$ that admits a Whitney map $\mu:\C{X}\to \pair{L,\lex\sp-}$, where $\lex\sp-$ is the reverse of $\lex$, but does not admit a Whitney map to a metric arc.
\end{ex}

Michel Smith also constructed another ``non-metric pseudoarc'' in \cite{smith1} but it turns out that this example does not work. We also give a proof for the sake of completeness.

\begin{ex}\label{exno}
There exists a chainable, hereditarily indecomposable continuum of weight $\omega_1$ that admits no Whitney map.
\end{ex}

We leave the following natural questions unsolved.

\begin{ques}
Does there exist a continuum $X$ that admits a Whitney map $\mu:\C{X}\to \pair{L,\lex}$?
\end{ques}

\begin{ques}
Given an arc $\pair{J,\lhd}$, is it possible to find a continuum $X$ that admits a Whitney map $\mu:\C{X}\to\pair{J,\lhd}$?
\end{ques}

\section{Definitions and conventions}

A continuum is non-degenerate if it contains more than one point. Let $X$ be a continuum. An \emph{order arc} in $\C{X}$ is a continuum $\A\subset\C{X}$ such that if $A,B\in\A$ then either $A\subset B$ or $B\subset A$. Clearly, with the subspace topology, an order arc is a topological arc. A \emph{long order arc} in $\C{X}$ is an order arc $\A$ such that $\A\cap\F[1]{X}\neq\emptyset$ and $X\in\A$. The following observation is obvious but essential.

\begin{remark}\label{obvious}
If a continuum $X$ admits a Whitney function onto an arc $\pair{J,\lhd}$, then any long order arc is order-isomorphic (thus, homeomorphic) to $\pair{J,\lhd}$.
\end{remark}

\begin{defi}(\cite{luismiguel-whitney})\label{definitionlevel}
Let $X$ be a continuum. A compact subset $\W\subset\C{X}$ is a Whitney level if the three following conditions hold:
\begin{itemize}
\item[(a)] either $\W=\F[1]{X}$ or $\W\cap\F[1]{X}=\emptyset$,
\item[(b)] if $A,B\in\W$ and $A\neq B$ then $A\not\subset B$ and $B\not\subset A$, and
\item[(c)] $\W$ intersects each long order arc.
\end{itemize}
\end{defi}

We will assume that the reader is familiar with the concepts of hereditarily indecomposable continuum, composant and chainable continuum (in the general, not necessarily metric setting). We will use the following results that are known for metrizable continua and can be proved in the general setting: there are order arcs between any two subcontinua that are comparable (\cite[Section 14]{hyp-ill-nad}) and the hyperspace of subcontinua of a hereditarily indecomposable continuum is uniquely arcwise connected (\cite[Section 18]{hyp-ill-nad}).

\begin{lemma}\label{confluenttohi}\cite[Theorem 4]{cook}
If $f:X\to Y$ is a surjective continuous function between continua, $X$ is metrizable and $Y$ is hereditarily indecomposable, then $f$ is confluent. This means that if $A\in\C{Y}$ and $B$ is a component of $f\sp\leftarrow[A]$, then $f[B]=A$.
\end{lemma}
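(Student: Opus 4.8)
The plan is to prove the nontrivial inclusion, namely that $A\subseteq f[B]$; the reverse inclusion $f[B]\subseteq A$ is immediate from $B\subseteq f^{\leftarrow}[A]$. If $A=Y$ then $f^{\leftarrow}[A]=X$ is connected, so $B=X$ and $f[B]=Y=A$; hence I may assume $A\subsetneq Y$. I fix a point $p\in B$ and put $y_0=f(p)$, so that $y_0\in f[B]\subseteq A$. The one structural fact about $Y$ that I will use is the standard characterization of hereditary indecomposability: any two subcontinua of $Y$ that have a point in common are $\subseteq$-comparable. In particular the family $\mathcal{K}=\{K\in\C{Y}:y_0\in K\}$ is a chain under inclusion.

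The key idea is to lift an order arc through $B$ and push it forward. Using that there are order arcs between comparable subcontinua, I choose an order arc $\{G_t\}_{t\in[0,1]}\subseteq\C{X}$ that is increasing in $t$, with $G_0=\{p\}$, $G_1=X$, and with $G_{t_0}=B$ for some $t_0\in[0,1]$ (concatenate an order arc from $\{p\}$ to $B$ with one from $B$ to $X$). The assignment $C\mapsto f[C]$ is a continuous map $\C{X}\to\C{Y}$, so $h(t):=f[G_t]$ is a continuous, $\subseteq$-monotone path with $h(0)=\{y_0\}$, $h(1)=Y$, and $h(t_0)=f[B]$; moreover every $h(t)$ contains $y_0$, hence lies in the chain $\mathcal{K}$ and in particular is comparable to $A$.

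Now I set $t_1=\max\{t\in[0,1]:h(t)\subseteq A\}$. This set is nonempty (it contains $0$ and $t_0$) and closed, being the $h$-preimage of the closed set $\{K\in\C{Y}:K\subseteq A\}$; and $t_1<1$ because $h(1)=Y\not\subseteq A$. For $t>t_1$ we have $h(t)\not\subseteq A$, so comparability forces $h(t)\supsetneq A$; letting $t\downarrow t_1$ and using that a decreasing net of subcontinua converges in the Vietoris topology to its intersection, I get $h(t_1)=\bigcap_{t>t_1}h(t)\supseteq A$. Together with $h(t_1)\subseteq A$ this yields $h(t_1)=A$. To finish, $G_{t_1}\subseteq f^{\leftarrow}[A]$ is connected and contains $p$, so $G_{t_1}\subseteq B$; and since $h(t_0)=f[B]\subseteq A$ we have $t_0\le t_1$, whence $B=G_{t_0}\subseteq G_{t_1}$. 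Therefore $G_{t_1}=B$ and $A=h(t_1)=f[B]$, as desired.

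I expect the main obstacle to be the comparability step, which is exactly where hereditary indecomposability enters: it is what turns the soft fact ``$h(t)\not\subseteq A$'' into ``$h(t)\supseteq A$'' and thereby lets the one-sided limit pin down $h(t_1)=A$. A secondary technical point is that $Y$, and hence $\C{Y}$, need not be metrizable, so the identity $h(t_1)=\bigcap_{t>t_1}h(t)$ should be justified through nets of decreasing subcontinua rather than sequences; the metrizability of $X$ enters only through the hyperspace facts for $\C{X}$ recalled above and plays no further role in the argument.
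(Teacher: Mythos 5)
Your argument is correct, but there is nothing in the paper to compare it against: the lemma is quoted as Theorem~4 of Cook's paper and no proof is given in the text. As a self-contained verification your route is sound. The two essential ingredients are exactly where you locate them: (i) an order arc in $\C{X}$ from $\{p\}$ through $B$ to $X$, parametrized by $[0,1]$ — this is the only place metrizability of $X$ enters, and in the non-metric case one would replace the $[0,1]$-parametrization and the $\max$ by the intrinsic order on a generalized order arc; and (ii) comparability of intersecting subcontinua of $Y$, which is precisely hereditary indecomposability and is what upgrades $h(t)\not\subseteq A$ to $h(t)\supsetneq A$. All the small steps check out: $\{K\in\C{Y}:K\subseteq A\}$ is Vietoris-closed, so $t_1$ exists; and your right-hand limit identity is fine, although you can bypass the net-intersection discussion entirely by noting that $\{K\in\C{Y}:A\subseteq K\}$ is also Vietoris-closed, so $h(t_1)=\lim_{t\downarrow t_1}h(t)$ already contains $A$. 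For context, the proof usually found in the literature proceeds instead by extracting a \emph{minimal} subcontinuum $K$ of $X$ containing $p$ with $A\subseteq f[K]$ (minimality via nested intersections) and then showing $f[K]=A$ and $K\subseteq B$; your order-arc parametrization is in effect a continuous version of that minimization. It yields the same conclusion with comparable effort, and has the mild advantage of making the role of each hypothesis visible.
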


The pseudoarc is the continuum characterized by being the only hereditarily indecomposable and chainable metrizable continuum. See \cite{pseudoarc} for an overview on the pseudoarc.

We will also assume the reader's familiarity with inverse sequences and inverse limits (of length an arbitrary ordinal). See \cite{chigogidze} or \cite[2.5]{eng} for introductions in the general setting.

We will write $\pair{X_\alpha,f_\alpha\sp\beta,\lambda}$ for an inverse sequence of length the limit ordinal $\lambda$, where $X_\alpha$ are the base spaces and $f_\alpha\sp\beta:X_\beta\to X_\alpha$ are the bonding functions. The inverse limit will be written as $\lim_{\leftarrow}{\pair{X_\alpha,f_\alpha\sp\beta,\lambda}}=\pair{X,\pi_\alpha}_{\lambda}$ and consists on the limit space $X$ and a projection $\pi_\alpha:X\to X_\alpha$ for each $\alpha<\lambda$. Concretely, in this situation the limit space may be constructed as
$$
X=\big\{x\in\prod\{X_\alpha:\alpha<\lambda\}:\forall\alpha<\beta<\lambda\ [x(\alpha)=f_\alpha\sp\beta(x(\beta))]\big\},
$$
and the projections are the corresponding restrictions of projections to the factor spaces of the product. An inverse sequence $\pair{X_\alpha,f_\alpha\sp\beta,\lambda}$ is continuous if every time $\gamma<\lambda$ is a limit ordinal, then $\pair{X_\gamma,f_\alpha\sp\gamma}_\gamma=\lim_{\leftarrow}{\pair{X_\alpha,f_\alpha\sp\beta,\gamma}}$. 

The following is a result by Gentry and Pol according to \cite[6.3.16]{eng}.

\begin{lemma}\label{lemmamonotone}
If an inverse sequence of continua has all its bonding maps monotone, then the projections from the inverse limit are also monotone.
\end{lemma}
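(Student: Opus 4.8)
The plan is to check monotonicity fiberwise. Recall that a continuous surjection between continua is monotone precisely when the preimage of every point is connected; so I would fix $\alpha<\lambda$ together with a point $p\in X_\alpha$ and argue that the fiber $\pi_\alpha\sp\leftarrow[\{p\}]$ is a subcontinuum of $X$. (That the $\pi_\alpha$ are themselves surjective follows from surjectivity of the monotone bonding maps together with compactness of the factor spaces, so restricting attention to points of $X_\alpha$ loses nothing.)

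First I would isolate the fibers in the factor spaces. For each $\beta$ with $\alpha\le\beta<\lambda$ set $Y_\beta=(f_\alpha\sp\beta)\sp\leftarrow[\{p\}]\subset X_\beta$, with $Y_\alpha=\{p\}$. Each $Y_\beta$ is closed, hence compact; it is nonempty since the monotone map $f_\alpha\sp\beta$ is onto; and it is connected since $f_\alpha\sp\beta$ is monotone. Thus every $Y_\beta$ is a continuum.

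Next I would check that the bonding maps restrict to this family. For $\alpha\le\beta<\gamma<\lambda$ the identity $f_\alpha\sp\gamma=f_\alpha\sp\beta\circ f_\beta\sp\gamma$ gives, for $x\in Y_\gamma$, that $f_\alpha\sp\beta(f_\beta\sp\gamma(x))=f_\alpha\sp\gamma(x)=p$, so $f_\beta\sp\gamma$ carries $Y_\gamma$ into $Y_\beta$; the same identity combined with surjectivity of $f_\beta\sp\gamma$ shows that the restriction $f_\beta\sp\gamma\restriction Y_\gamma:Y_\gamma\to Y_\beta$ is onto. Hence $\pair{Y_\beta,f_\beta\sp\gamma\restriction Y_\gamma,[\alpha,\lambda)}$ is an inverse sequence of continua with surjective bonding maps.

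Finally I would identify the fiber with the limit of this sub-sequence. A point $x\in X$ satisfies $\pi_\alpha(x)=p$ if and only if $x(\beta)\in Y_\beta$ for every $\beta\ge\alpha$, the coordinates below $\alpha$ being then forced by $x(\delta)=f_\delta\sp\alpha(p)$; this makes $\pi_\alpha\sp\leftarrow[\{p\}]$ canonically homeomorphic to $\lim_{\leftarrow}\pair{Y_\beta,f_\beta\sp\gamma\restriction Y_\gamma,[\alpha,\lambda)}$. By the standard fact that the inverse limit of a sequence of continua is a continuum (see \cite{chigogidze} or \cite{eng}), this limit is connected, so $\pi_\alpha\sp\leftarrow[\{p\}]$ is connected and $\pi_\alpha$ is monotone. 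The one place I expect to need care is exactly this last identification---verifying that discarding the coordinates below $\alpha$ is a homeomorphism, in particular that every thread of the $Y_\beta$ extends to a genuine element of $X$ and that no compatibility condition is lost---after which connectedness is immediate from the cited inverse-limit theorem.
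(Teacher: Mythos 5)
Your argument is correct. The paper offers no proof of this lemma---it is quoted as a result of Gentry and Pol via \cite[6.3.16]{eng}---and what you have written is precisely the standard proof of that cited fact: the fiber $\pi_\alpha\sp\leftarrow(p)$ is canonically the inverse limit of the system of fibers $Y_\beta=(f_\alpha\sp\beta)\sp\leftarrow(p)$ over the tail $[\alpha,\lambda)$, each of which is a continuum by monotonicity, so the fiber is a continuum by the theorem that an inverse limit of continua is a continuum. The one hypothesis you quietly add, surjectivity of the bonding maps (needed so that the $Y_\beta$ are nonempty and the restricted system is surjective), is not literally in the statement but is satisfied everywhere the lemma is applied in the paper, and your identification of the fiber with the sub-limit, including the observation that coordinates below $\alpha$ are forced, is carried out correctly.
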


The following result is easy to prove and well-known in the metric setting.

\begin{lemma}\label{limitpseudoarcs}
The inverse limit of a sequence of non-degenerate hereditarily indecomposable (chainable) continua with surjective bonding maps is non-degenerate and hereditarily indecomposable (respectively, chainable). In particular, every inverse limit of a countable sequence of pseudoarcs with surjective bonding maps is a pseudoarc.
\end{lemma}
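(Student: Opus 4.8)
The plan is to treat the three asserted properties separately after fixing notation. Write the system as $\pair{X_\alpha,f_\alpha^\beta,\lambda}$ and its limit as $\pair{X,\pi_\alpha}_\lambda$. Since the bonding maps are surjective, the standard theory of inverse limits of continua (as in \cite{chigogidze} or \cite[2.5, 6.1]{eng}) gives that $X$ is again a continuum and that every projection $\pi_\alpha$ is surjective. Non-degeneracy is then immediate: were $X$ a single point, $\pi_0[X]=X_0$ would be a single point, contradicting non-degeneracy of the factors. So the real work is hereditary indecomposability and chainability.

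The key preliminary fact I would record is that every closed set $A\subseteq X$ is recovered from its projections, namely $A=\bigcap_{\alpha<\lambda}\pi_\alpha^\leftarrow[\pi_\alpha[A]]$ (equivalently, $A$ is the inverse limit of the $\pi_\alpha[A]$ under the restricted bonding maps). One inclusion is trivial; for the other, if $\pi_\alpha(x)\in\pi_\alpha[A]$ for all $\alpha$, then any basic neighborhood of $x$ depends on finitely many coordinates, hence --- using that $\lambda$ is directed --- on a single coordinate $\beta$, and surjectivity of $\pi_\beta$ from $A$ onto $\pi_\beta[A]$ produces a point of $A$ in that neighborhood; as $A$ is closed, $x\in A$. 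I would also note the monotone behaviour of the projections of a fixed set: for $\alpha<\beta$ one has $\pi_\alpha[A]=f_\alpha^\beta[\pi_\beta[A]]$.

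For hereditary indecomposability I use the characterization that a continuum is hereditarily indecomposable exactly when any two of its subcontinua that meet are nested. Take subcontinua $A,B\subseteq X$ with $A\cap B\neq\emptyset$; I want $A\subseteq B$ or $B\subseteq A$. For each $\alpha$ the continua $\pi_\alpha[A]$ and $\pi_\alpha[B]$ share the image of a common point, so by hereditary indecomposability of $X_\alpha$ they are nested. Put $S=\{\alpha:\pi_\alpha[A]\subseteq\pi_\alpha[B]\}$ and $T=\{\alpha:\pi_\alpha[B]\subseteq\pi_\alpha[A]\}$, so that $S\cup T=\lambda$. The crux of the whole lemma is the observation that $S$ and $T$ are \emph{initial segments} of $\lambda$: if $\beta\in S$ and $\alpha<\beta$, applying $f_\alpha^\beta$ to $\pi_\beta[A]\subseteq\pi_\beta[B]$ gives $\pi_\alpha[A]\subseteq\pi_\alpha[B]$, so $\alpha\in S$, and symmetrically for $T$. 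Two initial segments of a well-ordered set whose union is the whole set cannot both be proper --- otherwise the maximum of two witnessing indices lies outside both --- so $S=\lambda$ or $T=\lambda$. In the first case the preliminary fact gives $A=\bigcap_\alpha\pi_\alpha^\leftarrow[\pi_\alpha[A]]\subseteq\bigcap_\alpha\pi_\alpha^\leftarrow[\pi_\alpha[B]]=B$, and symmetrically in the second. This is the step I expect to be the main obstacle, in the sense that the monotone/initial-segment bookkeeping is where the argument actually lives; everything else is soft.

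For chainability I work with the open-cover formulation: I must refine an arbitrary finite open cover of $X$ by an open chain cover. Given such a cover, refine it by basic sets $\pi_\alpha^\leftarrow[V]$ and extract a finite subrefinement; these involve finitely many indices, all below some $\beta<\lambda$, so composing bonding maps rewrites each as $\pi_\beta^\leftarrow[W]$. Thus a finite open cover $\W$ of $X_\beta$ pulls back to a refinement of the original cover. Chainability of $X_\beta$ gives a chain cover $\mathcal{C}=\{C_1,\dots,C_m\}$ refining $\W$, and since $\pi_\beta$ is onto, $\pi_\beta^\leftarrow[C_i]\cap\pi_\beta^\leftarrow[C_j]=\pi_\beta^\leftarrow[C_i\cap C_j]$ is nonempty precisely when $C_i\cap C_j$ is; hence $\{\pi_\beta^\leftarrow[C_i]\}$ is an open chain cover of $X$ refining the given one. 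Finally, for the ``in particular'': a countable inverse limit of metrizable compacta is metrizable, so the limit of a sequence of pseudoarcs is a metrizable, non-degenerate, hereditarily indecomposable, chainable continuum, which by the stated characterization of the pseudoarc is the pseudoarc.
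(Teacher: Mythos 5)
Your proof is correct. The paper states this lemma without proof (it is cited as ``easy to prove and well-known in the metric setting''), and your argument supplies the standard one: the recovery of a closed set from its projections, the initial-segment bookkeeping for the nesting of $\pi_\alpha[A]$ and $\pi_\alpha[B]$, the pullback of chain covers through a single coordinate, and metrizability of countable limits for the ``in particular'' clause are all handled correctly.
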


\section{Example \ref{exyes}}

As mentioned before, we will use an example constructed by Smith in \cite{smith2}. Smith's example is homogeneous and hereditarily equivalent. However, we do not need these properties so for the sake of simplicity, we give a simplified version of Smith's example.

\begin{lemma}\label{decomposition}
Let $P$ be the pseudoarc. Then there exists a continuous function $\pi:P\to P$ such that $\{\pi\sp\leftarrow(x):x\in P\}$ is a continuous decomposition of $P$ into pseudoarcs.
\end{lemma}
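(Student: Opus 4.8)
The plan is to realize the pseudoarc as a suitable inverse limit of squares, arranged so that one coordinate projection passes to the limit and becomes $\pi$. Concretely, I would fix an auxiliary continuous inverse sequence $\pair{I_n,q_n\sp{n+1},\omega}$ with each $I_n=[0,1]$, together with a parallel sequence $\pair{S_n,f_n\sp{n+1},\omega}$ with each $S_n=[0,1]\times[0,1]$, linked by the requirement that each bonding map have the triangular form $f_n\sp{n+1}(x,y)=\pair{q_n\sp{n+1}(x),h_n\sp{n+1}(x,y)}$. This form is exactly what forces the first–coordinate projections $p_n\colon S_n\to I_n$ to commute with the bonding maps, so that they induce a continuous surjection $\pi\colon P\to Z$, where $P=\lim_{\leftarrow}\pair{S_n,f_n\sp{n+1},\omega}$ and $Z=\lim_{\leftarrow}\pair{I_n,q_n\sp{n+1},\omega}$. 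Since every pseudoarc is homeomorphic to $P$, once $Z$ and $P$ are shown to be pseudoarcs we may identify $Z$ with $P$ and read $\pi$ as the desired self-map.

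Next I would impose crookedness in three layers. Choose the $q_n\sp{n+1}$ increasingly crooked so that $Z$ is hereditarily indecomposable and chainable, hence the pseudoarc. Arrange, for every thread $z=(z_n)\in Z$, that the restricted bonding maps $\{z_{n+1}\}\times[0,1]\to\{z_n\}\times[0,1]$ induced by $f_n\sp{n+1}$ are themselves increasingly crooked, so that each fiber $\pi\sp\leftarrow(z)=\lim_{\leftarrow}\pair{\{z_n\}\times[0,1],f_n\sp{n+1}|,\omega}$ is again a pseudoarc. I should emphasize that Lemma \ref{limitpseudoarcs} does not apply directly to $P$, since the $S_n$ are squares rather than pseudoarcs; instead I would verify hereditary indecomposability and chainability of $P$ directly from the crookedness of the $f_n\sp{n+1}$, exactly as in the standard proof that a crooked inverse limit of arcs is a pseudoarc. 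The essential point is that the maps $h_n\sp{n+1}$ must genuinely entangle the two coordinates (they cannot be independent of $x$), for otherwise $P$ would be the two–dimensional product $Z\times(\text{pseudoarc})$ rather than a one–dimensional pseudoarc.

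It then remains to check that $\{\pi\sp\leftarrow(z):z\in Z\}$ is a \emph{continuous} decomposition. Upper semicontinuity is automatic: $P$ is compact and $Z$ is Hausdorff, so $\pi$ is a closed map and its point–inverses form an upper semicontinuous decomposition. For lower semicontinuity, equivalently for the continuity of $z\mapsto\pi\sp\leftarrow(z)$ as a map into $\C{P}$, I would build the construction uniformly across slices, so that $f_n\sp{n+1}$ carries the vertical slice over $z_{n+1}$ \emph{onto} the vertical slice over $z_n=q_n\sp{n+1}(z_{n+1})$ with controlled oscillation; passing to the limit, nearby fibers are then Vietoris–close, which is exactly lower semicontinuity.

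The hard part is the simultaneous bookkeeping of these crookedness requirements. A single sequence of triangular bonding maps must be crooked in the base direction (to make $Z$ a pseudoarc), crooked in every fiber (to make the fibers pseudoarcs), crooked and coordinate–entangling overall (to keep $P$ one–dimensional and hereditarily indecomposable rather than a product), and uniform enough across slices to make the decomposition continuous — all with the crookedness intensifying fast enough along the sequence. Producing bonding maps that meet every one of these constraints at once is the technical core of the argument, and is precisely the delicate construction that must be executed with care, in the spirit of Smith's original example.
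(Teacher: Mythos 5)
Your proposal is a plan rather than a proof: everything that makes the lemma true is deferred to the last paragraph, where you acknowledge that producing triangular bonding maps $f_n^{n+1}(x,y)=\pair{q_n^{n+1}(x),h_n^{n+1}(x,y)}$ meeting all your constraints at once ``is the technical core of the argument'' --- and then you do not produce them. Three things in particular are asserted without justification, and none is routine. First, chainability of the inverse limit $P$ of squares: crookedness of the base maps and of the restricted fiber maps does not by itself yield fine chain covers of the $S_n$, and without chainability (in particular one-dimensionality) you cannot conclude that $P$ is the pseudoarc; you correctly observe that Lemma \ref{limitpseudoarcs} does not apply, but you replace it with an unproved claim. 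Second, the identification $\pi^{\leftarrow}(z)=\lim_{\leftarrow}\pair{\{z_n\}\times[0,1],f_n^{n+1}|,\omega}$ requires each restricted map to send the slice over $z_{n+1}$ \emph{onto} the slice over $q_n^{n+1}(z_{n+1})$ for every thread simultaneously; this is an extra uniform surjectivity condition that must be built into the construction, not assumed. Third, lower semicontinuity of $z\mapsto\pi^{\leftarrow}(z)$ is attributed to ``controlled oscillation'' with no statement of what the control is or why it survives the passage to the limit. These are exactly the delicate simultaneous bookkeeping issues that make constructions of this type (in the style of Lewis and Walsh) long and technical; naming them does not discharge them.

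The paper's own proof sidesteps the entire construction. Take any positive Whitney level $\W$ of the metric pseudoarc $P$. By Theorems 37.4 and 44.1 of Illanes--Nadler, $\W$ is hereditarily indecomposable and chainable, hence itself a pseudoarc. Its elements are nondegenerate subcontinua of $P$ (hence pseudoarcs), pairwise disjoint because $P$ is hereditarily indecomposable and distinct members of a Whitney level are incomparable, and they cover $P$ because every order arc from a singleton to $P$ meets $\W$. A compact, pairwise disjoint family of subcontinua covering $P$ is automatically a continuous decomposition, so composing the decomposition map with a homeomorphism $h:\W\to P$ gives $\pi$. If you wish to keep the inverse-limit route you must actually exhibit bonding maps and verify the three points above; as it stands the Whitney-level argument is both shorter and complete, while yours has its main content still to be supplied.
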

\begin{proof}
Let $\W$ be a non-trivial Whitney level of $P$. It is known that $\W$ is hereditarily indecomposable and chainable (see Theorems 37.4 and 44.1 in \cite{hyp-ill-nad}). Thus, there is a homeomorphism $h:\W\to P$. Notice that each element of $\W$ is a pseudoarc and any two elements of $\W$ are disjoint. Let $\pi:P\to P$ be defined so that $\pi(x)=h(K)$ where $x\in K\in\W$.
\end{proof}

The construction of the example is in $\omega_1$ steps. Construct a continuous inverse sequence $\pair{X_\alpha,\pi_\alpha\sp\beta,\omega_1}$ such that:
\begin{itemize}
\item[(i)] $X_0$ is a singleton and if $0<\alpha<\omega_1$, $X_\alpha$ is homeomorphic to the pseudoarc; and
\item[(ii)] for each $\alpha<\omega_1$, the function $\pi_\alpha\sp{\alpha+1}:X_{\alpha+1}\to X_\alpha$ is continuous and $\{(\pi_\alpha\sp{\alpha+1})\sp\leftarrow(x):x\in X_\alpha\}$ is a continuous decomposition of $X_{\alpha+1}$ into pseudoarcs.
\end{itemize}

Notice that the condition (ii) is possible by Lemma \ref{decomposition} and the sequence can be made continuous thanks to Lemma \ref{limitpseudoarcs}. Let $\pair{X,\pi_\alpha}_{\omega_1}=\lim_{\leftarrow}{\pair{X_\alpha,\pi_\alpha\sp\beta,\omega_1}}$. Then $X$ is hereditarily indecomposable and chainable by Lemma \ref{limitpseudoarcs}. 

To simplify what follows, we adopt the following notation: let $X_{\omega_1}=X$; $\pi_{\omega_1}:X\to X$ is the identity; if $\alpha\leq\omega_1$ and $x\in X$ let $x_\alpha=\pi_\alpha(x)$. The following follows because the sequence is continuous.

\vskip12pt
\noindent($\ast$)
If $x\in X$ and $\gamma\leq\omega_1$ is a limit ordinal, then $\pi_\gamma\sp\leftarrow(x_\gamma)=\bigcap\{\pi_\alpha\sp\leftarrow(x_\alpha):\alpha<\gamma\}$.
\vskip12pt

For each $\alpha<\omega_1$, let $\W_\alpha=\{\pi_\alpha\sp\leftarrow(x):x\in X_\alpha\}$, by Lemma \ref{lemmamonotone}, $\W_\alpha\subset\C{X}$ and is pairwise disjoint. Notice that $\W_\beta$ refines $\W_\alpha$ whenever $\alpha<\beta\leq\omega_1$.

\begin{claim}
If $A\in\C{X}\setminus\F[1]{X}$ then there exists $\gamma<\omega_1$, $B\in\W_\gamma$ and $C\in\W_{\gamma+1}$ such that $C\subset A\subset B$.
\end{claim}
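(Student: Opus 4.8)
The plan is to locate the unique ``critical level'' at which $A$ stops being contained in a single fiber. For $\alpha\leq\omega_1$ call $\alpha$ \emph{small} if $\pi_\alpha[A]$ is a single point; since $A$ is nonempty, this happens exactly when $A\subset\pi_\alpha\sp\leftarrow(\pi_\alpha(x))$ for $x\in A$, that is, exactly when $A$ is contained in a member of $\W_\alpha$. Because $\pi_\alpha=\pi_\alpha\sp\beta\circ\pi_\beta$ for $\alpha<\beta$, we have $\pi_\alpha[A]=\pi_\alpha\sp\beta[\pi_\beta[A]]$, so $\pi_\alpha[A]$ is a point whenever $\pi_\beta[A]$ is; hence the small ordinals form an initial segment of $[0,\omega_1]$. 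It contains $0$ (as $X_0$ is a singleton) and omits $\omega_1$ (as $\pi_{\omega_1}$ is the identity and $A\notin\F[1]{X}$).

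Next I would show this initial segment is closed under limits, which forces it to have a largest element $\gamma$ with $\gamma+1$ not small. Indeed, if $\gamma$ is a limit ordinal and every $\alpha<\gamma$ is small, fix $x\in A$; then $A\subset\pi_\alpha\sp\leftarrow(x_\alpha)$ for all $\alpha<\gamma$, so by the continuity property $(\ast)$ we get $A\subset\bigcap_{\alpha<\gamma}\pi_\alpha\sp\leftarrow(x_\alpha)=\pi_\gamma\sp\leftarrow(x_\gamma)$, whence $\pi_\gamma[A]=\{x_\gamma\}$ and $\gamma$ is small. Thus the small ordinals form a set of the form $[0,\gamma]$, and since $\omega_1$ is not small and is a limit, $\gamma+1<\omega_1$. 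Writing $b=\pi_\gamma[A]$ (a point, as $\gamma$ is small), the set $B=\pi_\gamma\sp\leftarrow(b)\in\W_\gamma$ satisfies $A\subset B$.

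It remains to produce $C\in\W_{\gamma+1}$ with $C\subset A$, and here the hereditary indecomposability of $X$ does the work. Since $\gamma+1$ is not small, $\pi_{\gamma+1}[A]$ is nondegenerate; choose any $y\in\pi_{\gamma+1}[A]$ and set $C=\pi_{\gamma+1}\sp\leftarrow(y)\in\W_{\gamma+1}$, which is a subcontinuum of $X$ since $\pi_{\gamma+1}$ is monotone by Lemma \ref{lemmamonotone}. By the choice of $y$ there is $p\in A$ with $\pi_{\gamma+1}(p)=y$, so $p\in A\cap C$ and hence the subcontinua $A$ and $C$ are nested. If $A\subset C$ held, then $\pi_{\gamma+1}[A]=\{y\}$ would be a point, contradicting that $\gamma+1$ is not small; therefore $C\subset A$, and combining with the previous paragraph yields $C\subset A\subset B$, as required.

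The only genuinely delicate point is the limit step: one must invoke the continuity of the inverse sequence through $(\ast)$ to guarantee that the critical level $\gamma$ is actually attained---rather than being a limit ordinal at which $A$ has already spread across several fibers---and in particular that $\gamma<\omega_1$. The final inclusion $C\subset A$, by contrast, is immediate once one recognizes that in a hereditarily indecomposable continuum any two intersecting subcontinua are comparable.
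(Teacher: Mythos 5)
Your proof is correct and takes essentially the same approach as the paper's: the paper locates the least $\beta$ with $A\not\subset\pi_\beta\sp\leftarrow(x_\beta)$ where you locate the largest $\gamma$ with $\pi_\gamma[A]$ a singleton, but this is the same critical level, and both arguments use $(\ast)$ to rule out a limit ordinal there and hereditary indecomposability to obtain $C\subset A$.
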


Let $x\in A$. By $(\ast)$, $A$ intersects each continuum in $\{\pi_\alpha\sp\leftarrow(x_\alpha):\alpha\leq\omega_1\}$. Notice that $\pi_0\sp\leftarrow(x_0)=X\supset A$ and $\pi_{\omega_1}\sp\leftarrow(x_{\omega_1})=\{x\}\not\supset A$. Let $\beta$ be the minimal ordinal $\alpha\leq\omega_1$ such that $A\not\subset\pi_\alpha\sp\leftarrow(x_\alpha)$. 

First, notice that $\beta$ cannot be a limit ordinal. Otherwise, since $A\subset\pi_\alpha\sp\leftarrow(x_\alpha)$ for each $\alpha<\beta$, by $(\ast)$ we obtain that $A\subset\pi_\beta\sp\leftarrow(x_\beta)$. Also, $\beta\neq0$ so $\beta=\gamma+1$ for some $\gamma<\omega_1$. Since $X$ is hereditarily indecomposable, $A\supset\pi_\beta\sp\leftarrow(x_\beta)$. Then $\gamma$, $B=\pi_\gamma\sp\leftarrow(x_\gamma)$ and $C=\pi_\beta\sp\leftarrow(x_\beta)$ witness the Claim.
\vskip15pt

Consider the function $\Pi_\alpha:\C{X}\to\C{X_{\alpha+1}}$ defined by $\Pi_\alpha(A)=\pi_{\alpha+1}[A]$ for all $A\in\C{X}$, it is well-known that this construction produces a continuous function. The following properties of this function can be easily checked.

\begin{claim}\label{propertiesPi}
Let $\alpha<\omega_1$. Then $\Pi_\alpha$ has the following properties:
\begin{itemize}
\item[(1)] if $A,B\in\C{X}$ and $A\subset B$ then $\Pi_\alpha(A)\subset\Pi_\alpha(B)$,
\item[(2)] $(\Pi_\alpha)\sp\leftarrow(A)$ is a singleton exactly when $A\notin\F[1]{X_{\alpha+1}}$,
\item[(3)] $\W_{\alpha}=(\Pi_\alpha)\sp\leftarrow[\{(\pi_\alpha\sp{\alpha+1})\sp\leftarrow(x):x\in X_{\alpha}\}]$,
\item[(4)] $(\Pi_\alpha)\sp\leftarrow[\F[1]{X_{\alpha+1}}]=\{A\in\C{X}:\exists B\in\W_{\alpha+1}\ (A\subset B)\}$,
\item[(5)] $\Pi_\alpha[\W_{\alpha+1}]=\F[1]{X_{\alpha+1}}$ and
\item[(6)] $\Pi_\alpha\!\!\restriction_{\W_{\alpha+1}}$ is one-to-one.
\end{itemize}
\end{claim}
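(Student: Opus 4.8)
The plan is to extract two structural facts about the projections and then read off the six items from them. The first fact is that $\pi_{\alpha+1}$ is a monotone surjection: each bonding map $\pi_\beta\sp{\beta+1}$ is monotone because its fibers are the pseudoarcs of a continuous decomposition (hence connected), so Lemma \ref{lemmamonotone} makes every projection monotone; consequently $\pi_{\alpha+1}\sp\leftarrow[A]\in\C{X}$ for every $A\in\C{X_{\alpha+1}}$. I would also record that each fiber $\pi_{\alpha+1}\sp\leftarrow(x)$ is a nondegenerate member of $\W_{\alpha+1}$, nondegeneracy holding because the tail of the sequence keeps blowing points up into pseudoarcs, so by Lemma \ref{limitpseudoarcs} the fiber is itself a pseudoarc. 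The second fact is hereditary indecomposability of $X$, which I will invoke only for the uniqueness clause of (2).

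I would dispose of (1), (5) and (6) first, as they are immediate: (1) is the monotonicity of images under inclusion; for (5) and (6), surjectivity gives $\Pi_\alpha(\pi_{\alpha+1}\sp\leftarrow(x))=\{x\}$, so $\Pi_\alpha$ carries $\W_{\alpha+1}$ onto $\F[1]{X_{\alpha+1}}$ and sends distinct fibers to distinct points.

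The core of the argument is (2), which I would handle as follows. Given $A\in\C{X_{\alpha+1}}$, set $K_0=\pi_{\alpha+1}\sp\leftarrow[A]$; by monotonicity $K_0\in\C{X}$ and $\Pi_\alpha(K_0)=A$, so the fiber over $A$ is never empty. When $A$ is nondegenerate, I would take any $K\in\C{X}$ with $\Pi_\alpha(K)=A$ (so automatically $K\subset K_0$) and argue $K=K_0$: for each $a\in A$ the fiber $F_a=\pi_{\alpha+1}\sp\leftarrow(a)$ meets $K$, and since $F_a$ and $K$ are comparable subcontinua of the hereditarily indecomposable $X$, the alternative $K\subset F_a$ would force $A\subset\{a\}$, contradicting nondegeneracy; hence $F_a\subset K$ for every $a$, giving $K_0\subset K$ and thus $K=K_0$. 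When $A=\{x\}$ is a singleton, the fiber over $A$ equals $\{K\in\C{X}:K\subset\pi_{\alpha+1}\sp\leftarrow(x)\}$, which contains both the nondegenerate continuum $\pi_{\alpha+1}\sp\leftarrow(x)$ and its points, hence is not a singleton.

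Finally I would read off (4) and (3) from what precedes. For (4), $\Pi_\alpha(A)$ is a singleton exactly when $A\subset\pi_{\alpha+1}\sp\leftarrow(x)$ for some $x$, i.e. exactly when $A$ is contained in a member of $\W_{\alpha+1}$. For (3), each member $(\pi_\alpha\sp{\alpha+1})\sp\leftarrow(x)$ of the decomposition of $X_{\alpha+1}$ is nondegenerate, so by (2) it has a unique $\Pi_\alpha$-preimage; using $\pi_\alpha=\pi_\alpha\sp{\alpha+1}\circ\pi_{\alpha+1}$ this preimage is precisely $\pi_\alpha\sp\leftarrow(x)$, and letting $x$ run over $X_\alpha$ recovers $\W_\alpha$ exactly. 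The only step needing genuine care is the uniqueness half of (2), where hereditary indecomposability is essential; everything else is bookkeeping with the projections.
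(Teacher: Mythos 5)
Your verification is correct, and it fills in exactly the details the paper elides (the paper dismisses this claim with ``can be easily checked''): monotonicity of $\pi_{\alpha+1}$ via Lemma \ref{lemmamonotone} gives surjectivity of $\Pi_\alpha$ and the candidate preimage $\pi_{\alpha+1}\sp\leftarrow[A]$, and hereditary indecomposability forces uniqueness in (2), from which (3) and (4) follow. The only quibble is cosmetic: a fiber $\pi_{\alpha+1}\sp\leftarrow(x)$ is the limit of an $\omega_1$-length tail, so Lemma \ref{limitpseudoarcs} yields a non-degenerate hereditarily indecomposable chainable continuum rather than literally a (metrizable) pseudoarc --- but non-degeneracy is all your argument uses, so nothing breaks.
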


Using this, let us prove the following.

\begin{claim}\label{claimfirstlevels}
For each $\alpha\leq\omega_1$, $\W_\alpha$ is a Whitney level of $\C{X}$.
\end{claim}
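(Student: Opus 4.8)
The plan is to verify, for each $\alpha\leq\omega_1$, the three conditions of Definition \ref{definitionlevel} together with compactness of $\W_\alpha$, handling conditions (a) and (b) by a quick case analysis and reserving the real work for condition (c). First I would dispose of the endpoint cases and of (a), (b), and compactness. The endpoints are immediate: $\W_0=\{X\}$ is a one-point set with $X\notin\F[1]{X}$ (as $X$ is non-degenerate), and $\W_{\omega_1}=\F[1]{X}$ because $\pi_{\omega_1}$ is the identity; both are compact (the latter being homeomorphic to $X$) and satisfy (a) and (b) trivially.

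For $0<\alpha<\omega_1$, the elements of $\W_\alpha$ are the fibers of $\pi_\alpha$, which are subcontinua by Lemma \ref{lemmamonotone} and are pairwise disjoint, giving (b). For (a) I would show that no fiber is a singleton: writing $\pi_\alpha=\pi_\alpha\sp{\alpha+1}\circ\pi_{\alpha+1}$, one has $\pi_\alpha\sp\leftarrow(x)=\pi_{\alpha+1}\sp\leftarrow[(\pi_\alpha\sp{\alpha+1})\sp\leftarrow(x)]$, and since $(\pi_\alpha\sp{\alpha+1})\sp\leftarrow(x)$ is a non-degenerate pseudoarc and $\pi_{\alpha+1}$ is surjective, this preimage has more than one point; hence $\W_\alpha\cap\F[1]{X}=\emptyset$. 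Compactness of $\W_\alpha$ I would read off from Claim \ref{propertiesPi}(3): it is the $\Pi_\alpha$-preimage of the decomposition $\{(\pi_\alpha\sp{\alpha+1})\sp\leftarrow(x):x\in X_\alpha\}$, which is compact (homeomorphic to $X_\alpha$), and $\Pi_\alpha$ is continuous on the compact space $\C{X}$.

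The heart of the argument is condition (c), where I would give a single uniform proof valid for every $\alpha\leq\omega_1$. Let $\A$ be a long order arc. Since $\A$ is a chain under inclusion that meets $\F[1]{X}$, it contains a singleton $\{p\}$, which must then be its $\subset$-minimum, so every $A\in\A$ contains the point $p$. Let $F=\pi_\alpha\sp\leftarrow(p_\alpha)\in\W_\alpha$ be the element of $\W_\alpha$ through $p$. For each $A\in\A$ we have $p\in A\cap F$, so $A\cap F\neq\emptyset$, and hereditary indecomposability of $X$ forces $A$ and $F$ to be comparable. Thus $\A=\mathcal{A}_-\cup\mathcal{A}_+$, where $\mathcal{A}_-=\{A\in\A:A\subset F\}$ and $\mathcal{A}_+=\{A\in\A:F\subset A\}$.

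To finish I would run a connectedness argument: both $\mathcal{A}_-$ and $\mathcal{A}_+$ are closed in $\A$ (the sets $\{A:A\subset F\}$ and $\{A:F\subset A\}$ are Vietoris-closed in $\C{X}$) and both are non-empty, since $\{p\}\in\mathcal{A}_-$ and $X\in\mathcal{A}_+$. As $\A$ is connected, these two non-empty closed sets cannot be disjoint, so some $A\in\A$ lies in both, which means $A=F$; hence $F\in\A\cap\W_\alpha$. I expect this last step to be the main obstacle, namely producing an \emph{honest} element of $\W_\alpha$ lying on the arc rather than merely a supremum or infimum of a subchain: it is precisely the comparability furnished by hereditary indecomposability that makes the two halves of $\A$ overlap at a genuine fiber, and this is where the hypotheses on $X$ are indispensable.
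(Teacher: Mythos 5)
Your proof is correct, and for the key condition (c) it takes a genuinely different route from the paper's. The paper argues as follows: given a long order arc $\A$ containing $\{x\}$, it sets $B=\pi_\alpha\sp\leftarrow(x_\alpha)\in\W_\alpha$, uses the existence of order arcs between comparable subcontinua to build a long order arc $\A\sp\prime$ from $\{x\}$ through $B$ to $X$, and then invokes the unique arcwise connectedness of $\C{X}$ (a nontrivial theorem about hyperspaces of hereditarily indecomposable continua) to conclude $\A\sp\prime=\A$, hence $B\in\A$. You instead use hereditary indecomposability only in its elementary form --- two intersecting subcontinua are comparable --- to split $\A$ into the two closed, nonempty pieces $\{A\in\A:A\subset F\}$ and $\{A\in\A:F\subset A\}$ and let connectedness of $\A$ force them to meet, necessarily at $F$ itself. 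Your version is more self-contained: it needs neither the order-arc existence theorem nor unique arcwise connectedness, only the definition of hereditary indecomposability and basic Vietoris-topology facts (both of the sets you use are indeed closed in $\C{X}$ for a compact Hausdorff $X$), so there is less to re-verify in the non-metric setting. What the paper's approach buys is brevity given machinery it already relies on elsewhere (unique arcwise connectedness is used again in Section 4). Your handling of (a), (b) and compactness matches the paper's, which leaves these as ``easy''; your explicit check that the fibers of $\pi_\alpha$ are non-degenerate for $0<\alpha<\omega_1$, via $\pi_\alpha\sp\leftarrow(x)=\pi_{\alpha+1}\sp\leftarrow[(\pi_\alpha\sp{\alpha+1})\sp\leftarrow(x)]$ and surjectivity of $\pi_{\alpha+1}$, is a welcome detail the paper omits.
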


The Claim is obvious if $\alpha\in\{0,\omega_1\}$ so let us assume this is not the case. We have already mentioned that $\W_\alpha\subset\C{X}$. By (3) in Claim \ref{propertiesPi}, we obtain that $\W_\alpha$ is closed. Properties (a) and (b) from Definition \ref{definitionlevel} follow easily. Now let $\A$ be a long order arc. There is $x\in X$ such that $\{x\}\in\A$ so $B=\pi_\alpha\sp\leftarrow(x_\alpha)\in\W_\alpha$. So it is possible to construct a long order arc $\A\sp\prime$ from $\{x\}$ to $X$ that passes through $B$. But the hyperspace of a hereditarily indecomposable continuum is uniquely arcwise connected so $\A\sp\prime=\A$. This proves (c) in Definition \ref{definitionlevel} and the Claim.
\vskip15pt

Now, for each $\alpha<\omega_1$, let 
$$
\I(\alpha)=\{A\in\C{X}:\exists B\in\W_\alpha, \exists C\in\W_{\alpha+1}\ (C\subset A\subset B)\}
$$
which is the interval of all subcontinua between $\W_\alpha$ and $\W_{\alpha+1}$. By Claim \ref{claimfirstlevels}, it is not hard to prove that $\I(\alpha)$ is closed, thus compact. Define $h_\alpha:\I(\alpha)\to\C{X_{\alpha+1}}$ to be $h_\alpha=\Pi_\alpha\!\!\restriction_{\I(\alpha)}$. By Claim \ref{propertiesPi} we obtain that $h_\alpha$ is an embedding that preserves the strict order relation $\subset$. 

Let $J=[0,2]$ if $\alpha\neq 0$ and $J=[0,1]$ if $\alpha=0$. By the famous result on extension of Whitney maps (\cite[Theorem 16.10]{hyp-ill-nad}), there exists a Whitney map $\mu_\alpha:\C{X_{\alpha+1}}\to\pair{J,<}$ such that $\mu\sp\leftarrow(1)=\{(\pi_\alpha\sp{\alpha+1})\sp\leftarrow(x):x\in X_\alpha\}$. Then it follows that $h_\alpha\circ\mu_\alpha:\I(\alpha)\to[0,1]$ is a continuous function that preserves the strict order relation, so it is a partial Whitney map for the set $\I(\alpha)$.

Finally, let us define the desired Whitney map $\mu:\C{X}\to\pair{L,\lex\sp-}$ by pasting the partial Whitney maps we have obtained in the following way:
$$
\mu(A)=\left\{
\begin{array}{ll}
\pair{\omega_1,0}, & \textrm{ if } A\in\F[1]{X},\\
\pair{\alpha,1-(h_\alpha\circ\mu_\alpha)(A)}, & \textrm{ if } A\in\I(\alpha)\setminus\W_{\alpha+1}\textrm{ for some }\alpha<\omega_1.
\end{array}
\right.
$$

The proof that this is a Whitney map follows easily from all the work done before. It remains, of course, to show that $X$ admits no Whitney maps to $\pair{[0,1],<}$ and this follows from Remark \ref{obvious}. Also, as a last remark, the weight of $X$ is at most $\omega_1$ by construction and must be uncountable because it admits no Whitney maps to a metric arc; as a consequence, its weight is $\omega_1$.

\section{Example \ref{exno}}

In this section we will explore Smith's example from \cite{smith1}. We will just give the main ideas of the construction, the proofs can be read in the original paper. This gives us an example of a ``non-metric pseudoarc'' that admits no Whitney maps. 

\begin{lemma}\cite[Theorem 1]{smith1}\label{lemmaretraction}
Let $P\subsetneq Q$ pseudoarcs and $p\in P$. Then there is a continuous function $r:Q\to P$ such that $r(x)=x$ for each $x\in P$ (that is, it is a retraction) and $r(y)$ is in the same composant as $p$, whenever $y\in P\setminus Q$.
\end{lemma}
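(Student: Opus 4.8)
The plan is to use chainability to build $r$ as a limit of retractions defined on the interval factors of an inverse-limit representation. First I would realize $Q$ as an inverse limit $Q=\varprojlim\pair{I_n,g_n,\omega}$ of unit intervals $I_n=[0,1]$ whose bonding maps $g_n\colon I_{n+1}\to I_n$ are crooked, so that $Q$ is a pseudoarc by Lemma \ref{limitpseudoarcs}. Writing $\pi_n\colon Q\to I_n$ for the projections, the subcontinuum $P$ then satisfies $P=\varprojlim\pair{J_n,g_n\restriction_{J_{n+1}},\omega}$ automatically, where $J_n=\pi_n[P]$ is a closed subinterval of $I_n$ and $g_n[J_{n+1}]=J_n$; indeed $P=\bigcap_n\pi_n\sp\leftarrow[J_n]$ because $P$ is closed. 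The point $p$ corresponds to a thread $(p_n)_{n<\omega}$ with $p_n\in J_n$, and this is the picture in which I would try to produce $r$.

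The naive attempt is to retract each factor: for $J_n=[a_n,b_n]$ take the monotone retraction $\rho_n\colon I_n\to J_n$ given by $\rho_n(t)=\max\{a_n,\min\{t,b_n\}\}$, which fixes $J_n$ pointwise. These maps fail to commute with the bonding maps, so the family $(\rho_n)$ does not descend to the inverse limit as it stands, and repairing this is the first obstacle. Here I would exploit the slack afforded by crookedness: after passing to a subsequence of levels and refining the chains, one can arrange that $g_n\circ\rho_{n+1}$ and $\rho_n\circ g_n$ agree to within the mesh of the $n$-th chain, and build by recursion level retractions whose discrepancies under the bonding maps are summably small. The map $r$ on threads is then the limit of these, with continuity and $r\restriction_P=\mathrm{id}_P$ falling out of the construction.

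The genuinely delicate part is the composant requirement for the points of $Q$ lying outside $P$. Since a proper subcontinuum of an indecomposable continuum is nowhere dense, $Q\setminus P$ is dense, and since $r$ is a retraction the image $r[Q\setminus P]$ is dense in $P$; thus $r$ carries the complement of $P$ onto a dense subset of $P$ that must nonetheless be confined to the single composant of $p$, which is itself dense but of first category. This cannot be arranged one coordinate at a time, because membership in the composant of $p$ is not detected by any finite projection $\pi_n$: a point $z\in P$ lies in the composant of $p$ precisely when there is a proper subcontinuum of $P$ joining $p$ to $z$, which in the inverse-limit picture means a coherent system of proper subintervals $M_n\subsetneq J_n$ with $g_n[M_{n+1}]=M_n$ containing both $p_n$ and $z_n$. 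The strategy I would pursue is to design the level maps so that, for every thread outside $P$, its image thread is trapped inside such a coherent proper-subinterval system anchored at $p$.

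Achieving this confinement simultaneously with the compatibility needed for continuity is, I expect, the decisive difficulty: the two requirements pull against each other, since forcing images into proper subtubes through $p$ constrains exactly the coordinates that the commutativity repair was adjusting. This is where the fine crookedness estimates must be invoked, and it is precisely the content that Smith establishes in \cite[Theorem 1]{smith1}.
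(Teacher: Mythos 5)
First, a point of comparison: the paper does not prove this lemma at all. It is imported verbatim as \cite[Theorem 1]{smith1} and used as a black box (the paper explicitly says the proofs of this section's ingredients ``can be read in the original paper''), so there is no internal argument to measure your attempt against. Judged on its own terms, your proposal is a reasonable plan but not a proof, and its last sentence concedes as much: deferring ``the decisive difficulty'' to the very theorem being proved is circular. Concretely, two essential steps are missing. (1) The repair of the non-commuting diagrams: saying that $g_n\circ\rho_{n+1}$ and $\rho_n\circ g_n$ can be made to ``agree to within the mesh of the $n$-th chain'' with ``summably small'' discrepancies gestures at the standard Mioduszewski-style device for inducing maps between inverse limits from almost-commutative ladders, but you never exhibit the modified level maps, never verify that the errors can be driven to zero while keeping each level map the identity on $J_n$ (which is what $r\restriction_P=\mathrm{id}_P$ requires), and so no map $r$ is actually defined on $Q$, let alone shown to be a retraction. (2) The composant confinement is left entirely unconstructed. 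You correctly observe that membership in the composant of $p$ is invisible to every finite projection, and that $r[Q\setminus P]$ must be a dense subset of $P$ squeezed into a single (dense but first-category) composant --- but this observation only explains why a coordinate-at-a-time argument cannot work; it supplies no mechanism. The ``coherent proper-subinterval system anchored at $p$'' that would trap the image threads is precisely the object whose existence and compatibility with step (1) constitutes the theorem, and it is never built.

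Two smaller remarks. The statement as printed contains a typo ($y\in P\setminus Q$ is empty; it should read $y\in Q\setminus P$), which you silently and correctly repair. And your setup is sound as far as it goes: $P=\bigcap_n\pi_n\sp\leftarrow[J_n]$ with $J_n=\pi_n[P]$ a closed interval and $g_n[J_{n+1}]=J_n$ is correct, and it is the right picture in which Smith's chain-refinement argument lives. The gap is that everything after the setup describes what must be achieved rather than achieving it; as it stands the proposal establishes neither the existence of a retraction $Q\to P$ nor the composant control that is the whole point of the lemma.
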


Recursively, it is possible to construct a continuous inverse sequence $\pair{X_\alpha,f_\alpha\sp\beta,\omega_1}$, and sequences $\{i_\alpha\sp\beta:\alpha<\beta<\omega_1\}$, $\{K_\alpha:\alpha<\omega_1\}$, where:
\begin{itemize}
\item[(i)] for every $\alpha<\omega_1$, $X_\alpha$ is a pseudoarc and $K_\alpha$ is a composant of $X_\alpha$,
\item[(ii)] for every $\alpha<\beta<\omega_1$, $i_\alpha\sp\beta:X_\alpha\to X_\beta$ is an embedding and $i_\alpha\sp\beta[X_\alpha]\cap K_\beta=\emptyset$,
\item[(iii)] for every $\alpha<\beta<\omega_1$, $f_\alpha\sp\beta\circ i_\alpha\sp\beta$ is the identity function on $X_\alpha$, and
\item[(iv)] for every $\alpha<\omega_1$, $f_\alpha\sp{\alpha+1}[X_{\alpha+1}\setminus i_\alpha\sp{\alpha+1}[X_\alpha]]\subset K_\alpha$,
\end{itemize}

Notice that conditions (ii) and (iii), for $\beta=\alpha+1$, tell us that each function $f_\alpha\sp{\alpha+1}$ works as a retraction and by condition (iv), one part of the domain is mapped to a composant of our choice; this is of course an application of Lemma \ref{lemmaretraction}. The limit steps are possible by Lemma \ref{limitpseudoarcs}. Let $\pair{X,\pi_\alpha}_{\omega_1}=\lim_{\leftarrow}{\pair{X_\alpha,f_\alpha\sp\beta,\omega_1}}$. The fact that $X$ is hereditarily indecomposable, chainable and of weight $\omega_1$ can be checked in a similar way as for Example \ref{exyes}.

Again, take the following conventions: $X_{\omega_1}=X$; $\pi_{\omega_1}:X\to X$ is the identity; if $\alpha\leq\omega_1$ and $x\in X$ let $x_\alpha=\pi_\alpha(x)$.

Now that we have completed the construction, we may assume that the embeddings $i_\alpha\sp\beta:X_\alpha\to X_\beta$ are in fact inclusions. More generally, let us assume that $X_\alpha\subset X_\beta$ whenever $\alpha\leq\beta\leq\omega_1$. In this way, both the functions $f_\alpha\sp\beta$ and the functions $\pi_\alpha$ are retractions.

To prove that there are no Whitney maps for $\C{X}$, by Remark \ref{obvious}, it is enough to find two long order arcs that are not homeomorphic. In order to find these two long order arcs, we will use two different composants of $X$: $C_0=\bigcup\{X_\alpha:\alpha<\omega_1\}$ and $C_1=X\setminus C_0$.

\begin{fact}\cite{smith1}\label{easycomposant}
If $\gamma\leq\omega_1$ is a limit ordinal, then $\bigcup\{X_\alpha:\alpha<\gamma\}$ is a composant of $X_\gamma$.
\end{fact}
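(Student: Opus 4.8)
The plan is to prove that $Y:=\bigcup\{X_\alpha:\alpha<\gamma\}$ is exactly the composant of any of its points in $X_\gamma$. Recall first that $X_\gamma$ is indecomposable: for $\gamma<\omega_1$ it is a countable inverse limit of pseudoarcs with surjective bonding maps, hence a pseudoarc by Lemma \ref{limitpseudoarcs}, and for $\gamma=\omega_1$ it is the continuum $X$ itself. Consequently the relation ``lying in a common proper subcontinuum'' is an equivalence relation whose classes are the composants. The inclusion of $Y$ in a single composant is immediate: if $x,y\in Y$, say $x,y\in X_\beta$ with $\beta<\gamma$, then, as $\gamma$ is a limit ordinal, $\beta+1<\gamma$ and condition (ii) gives $X_\beta\cap K_{\beta+1}=\emptyset\neq K_{\beta+1}$, so $X_\beta\subsetneq X_{\beta+1}\subseteq X_\gamma$; thus $X_\beta$ is a proper subcontinuum of $X_\gamma$ containing both $x$ and $y$.

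For the reverse inclusion I would start from an arbitrary $z\in X_\gamma$ lying in the same composant as some $w\in Y$, fix $\alpha_1<\gamma$ with $w\in X_{\alpha_1}$, and take a proper subcontinuum $D\subsetneq X_\gamma$ with $z,w\in D$; the goal is $z\in Y$. Since the inverse sequence is continuous, $X_\gamma=\lim_{\leftarrow}\pair{X_\alpha,f_\alpha\sp\beta,\gamma}$, and a standard compactness argument shows that the closed set $D$ satisfies $D=\bigcap_{\alpha<\gamma}(f_\alpha\sp\gamma)\sp\leftarrow[f_\alpha\sp\gamma[D]]$. Hence, were $f_\alpha\sp\gamma[D]=X_\alpha$ for every $\alpha<\gamma$, we would get $D=X_\gamma$, contradicting properness. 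So I may fix $\alpha_0<\gamma$ with $\alpha_0>\alpha_1$ such that $f_\alpha\sp\gamma[D]$ is a proper subcontinuum of $X_\alpha$ for all $\alpha\in[\alpha_0,\gamma)$ (properness is inherited at higher levels because the bonding maps are onto).

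The core of the argument then combines conditions (ii) and (iv). For each $\alpha\in[\alpha_0,\gamma)$, both $w$ and $z_\alpha:=f_\alpha\sp\gamma(z)$ belong to the proper subcontinuum $f_\alpha\sp\gamma[D]$ of the pseudoarc $X_\alpha$ (here $w=f_\alpha\sp\gamma(w)$ since $f_\alpha\sp\gamma$ retracts onto $X_\alpha\supseteq X_{\alpha_1}$), so $z_\alpha$ and $w$ share a composant of $X_\alpha$; as $\alpha>\alpha_1$, condition (ii) gives $w\notin K_\alpha$, whence $z_\alpha\notin K_\alpha$. Since $z_\alpha=f_\alpha\sp{\alpha+1}(z_{\alpha+1})$, the contrapositive of condition (iv) forces $z_{\alpha+1}\in X_\alpha$, and because $f_\alpha\sp{\alpha+1}$ is a retraction this gives $z_\alpha=z_{\alpha+1}$. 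A transfinite induction on $\alpha\in[\alpha_0,\gamma]$ --- with successor steps as above and limit steps handled by uniqueness of threads together with continuity of the sequence --- then shows that the thread is constantly equal to $p:=z_{\alpha_0}\in X_{\alpha_0}$ from level $\alpha_0$ onward. In particular $z=z_\gamma=p\in X_{\alpha_0}\subseteq Y$, as desired.

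The step I expect to be the main obstacle is not any single estimate but the correct orchestration of this transfinite bookkeeping: confirming that proper projections persist to all higher levels, that the ``constant thread'' is preserved through limit stages, and that conditions (ii) and (iv) interlock precisely as indicated. Granting this, the two inclusions together show that $Y$ is the composant of each of its points, which is the assertion of the Fact.
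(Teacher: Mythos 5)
The paper offers no proof of this Fact: it is quoted from \cite{smith1} with the remark that the proofs can be read in the original paper, so there is nothing in the text to compare your argument against line by line. Your proof is correct and self-contained, and it supplies the omitted details. The first inclusion (that $\bigcup_{\alpha<\gamma}X_\alpha$ lies in a single composant of $X_\gamma$) is immediate as you say, since each $X_\beta$ with $\beta<\gamma$ is a \emph{proper} subcontinuum of $X_\gamma$ by condition (ii). For the reverse inclusion your bookkeeping does go through: for a closed $D$ one indeed has $D=\bigcap_{\alpha<\gamma}(f_\alpha^{\gamma})^{\leftarrow}[f_\alpha^{\gamma}[D]]$ (the sets $(f_\alpha^{\gamma})^{\leftarrow}[U]$ form a base of $X_\gamma$ by continuity of the sequence), so properness of $D$ is witnessed at some level $\alpha_0$ and propagates to all higher levels because the bonding maps are surjective; condition (ii) keeps $w$ out of $K_\alpha$ for $\alpha>\alpha_1$, disjointness of the composants of the pseudoarc $X_\alpha$ then keeps $z_\alpha$ out of $K_\alpha$, and the contrapositive of (iv) together with the retraction property forces $z_{\alpha+1}=z_\alpha\in X_\alpha$; the limit stages follow since a point of an inverse limit is determined by its coordinates on a cofinal set. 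The only points worth making explicit in a written-up version are the two facts you use tacitly: that composants of an indecomposable continuum are pairwise disjoint (needed to pass from ``$w\notin K_\alpha$'' to ``$z_\alpha\notin K_\alpha$''), and that after the identifications $f_\alpha^{\gamma}$ restricts to the identity on $X_{\alpha_1}$ so that $w\in f_\alpha^{\gamma}[D]$. Neither is a gap; both are available in the paper's setup.
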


Then $C_0$ is a composant of $X$, and we can construct an order arc in fact. Choose $p\in X_0$ arbitrarly and let $\A_0$ be the unique long order arc that goes from $\{p\}$ to $X$. Since $X$ is hereditarily indecomposable, $\C{X}$ is uniquely arcwise connected  and $X_\alpha\in\A_0$ for each $\alpha<\omega_1$. Notice that by Fact \ref{easycomposant}, whenever $\gamma\leq\omega_1$ is a limit ordinal, $X_\gamma=\cl[X_\gamma]{\bigcup\{X_\alpha:\alpha<\gamma\}}$. Then it is not hard to see the following.

\begin{claim}\label{easyorderarc}
The set $\{X_\alpha:\alpha<\omega_1\}$ is a cofinal set of order type $\pair{\omega_1,<}$ in the long order arc $\pair{\A_0,\subset}$.
\end{claim}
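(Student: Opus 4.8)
The plan is to verify the two assertions packaged into the claim separately: that $\{X_\alpha:\alpha<\omega_1\}$ is a strictly $\subset$-increasing chain of order type $\pair{\omega_1,<}$, and that it is cofinal in $\A_0$ below its maximum $X$. Both rest on two features already in place: the set $\A_0$ is an order arc, hence \emph{linearly} ordered by inclusion, so that $A$ is comparable with each $X_\alpha$; and $C_0=\bigcup\{X_\alpha:\alpha<\omega_1\}$ is a composant of $X$, hence dense.

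For the order type, I would first record that the inclusions $X_\alpha\subset X_\beta$ are \emph{proper} for $\alpha<\beta$. After the identification of the embeddings $i_\alpha\sp\beta$ with inclusions, condition (ii) gives $X_\alpha\cap K_\beta=\emptyset$, while $K_\beta$ is a composant of $X_\beta$ and hence nonempty; thus $X_\alpha\neq X_\beta$ and so $X_\alpha\subsetneq X_\beta$. Consequently $\alpha\mapsto X_\alpha$ is a strictly $\subset$-increasing map of $\omega_1$ into the linearly ordered set $\pair{\A_0,\subset}$, and therefore $\{X_\alpha:\alpha<\omega_1\}$ has order type exactly $\pair{\omega_1,<}$.

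For cofinality, I would show that every $A\in\A_0$ with $A\neq X$ satisfies $A\subset X_\alpha$ for some $\alpha<\omega_1$; since each $X_\alpha$ is a proper element of $\A_0$ and $X$ is its maximum, this is precisely the statement that the chain is cofinal in $\A_0\setminus\{X\}$. Fix such an $A$ and suppose, for contradiction, that $A\subset X_\alpha$ fails for every $\alpha$. Because $\A_0$ is an order arc, $A$ is comparable with each $X_\alpha$, so the failure of $A\subset X_\alpha$ forces $X_\alpha\subset A$ for every $\alpha<\omega_1$. Taking the union over $\alpha$ yields $C_0\subset A$; as $A$ is closed and $C_0$ is dense, we obtain $X=\cl{C_0}\subset A$, that is $A=X$, contradicting $A\neq X$.

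The argument is short, and the step requiring the most care is essentially bookkeeping: confirming that the inclusions are proper (so the order type does not collapse below $\omega_1$) and that the composant $C_0$ is dense. The conceptual content is the interplay between the \emph{linearity} of the order arc $\A_0$, which converts the two-sided comparability of $A$ with each $X_\alpha$ into a clean dichotomy, and the density of $C_0$, which is what promotes the inclusion $C_0\subset A$ to $X\subset A$.
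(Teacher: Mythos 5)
Your proof is correct and follows essentially the route the paper intends: it assembles exactly the ingredients the paper records just before the claim (linearity of the order arc $\A_0$, the membership $X_\alpha\in\A_0$ via unique arcwise connectedness, and the density $X=\cl{\bigcup\{X_\alpha:\alpha<\omega_1\}}$ coming from Fact 4.3) and the paper then leaves the verification as "not hard to see." Your added check that the inclusions $X_\alpha\subsetneq X_\beta$ are proper, via condition (ii) and the nonemptiness of $K_\beta$, is a reasonable piece of bookkeeping that the paper omits.
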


We have to use the second composant to constuct a different order arc. 

\begin{fact}\cite{smith1}
$C_1$ is the union of an increasing collection of continua of order $\pair{\omega,<}$.
\end{fact}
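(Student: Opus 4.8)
The plan is to realize $C_1$ as a single composant and then to push the whole problem down to one countable stage of the inverse sequence. Since every bonding map, hence every $\pi_\alpha$, is a retraction, a point $x\in X$ belongs to $C_0=\bigcup_{\alpha<\omega_1}X_\alpha$ precisely when its thread $\pair{x_\alpha:\alpha<\omega_1}$ is eventually constant; thus $C_1=X\setminus C_0$ is exactly the set of threads that never stabilize, and (this being the substantial point of the construction, for which I would cite \cite{smith1}) it is a composant of $X$. I would fix a point $q\in C_1$, so that $C_1$ is the composant of $q$, i.e.\ $C_1=\bigcup\mathcal{D}$, where $\mathcal{D}$ is the family of all proper subcontinua of $X$ containing $q$. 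As the union of two members of $\mathcal{D}$ is again a proper subcontinuum through $q$ (otherwise $X$ would be decomposable), $\mathcal{D}$ is directed by inclusion, and it suffices to produce a countable increasing sequence in $\mathcal{D}$ whose union is all of $C_1$; in fact I will find one that is cofinal in $\mathcal{D}$.

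For $K\in\mathcal{D}$ let $\alpha_0(K)$ be the least $\alpha$ with $\pi_\alpha(K)\subsetneq X_\alpha$. Continuity of the inverse sequence shows $\alpha_0(K)$ is never a limit ordinal (if $\pi_\beta(K)=X_\beta$ for all $\beta<\gamma$ with $\gamma$ limit, then $\pi_\gamma(K)=X_\gamma$), and since $\pi_\alpha(K)=X_\alpha$ for cofinally many $\alpha$ would force $K=X$, each $\alpha_0(K)$ is a genuine countable ordinal. The crux of the argument is to find a single $\gamma^*<\omega_1$ bounding $\alpha_0(K)$ for all $K\in\mathcal{D}$, equivalently, such that no proper subcontinuum through $q$ surjects onto $X_{\gamma^*}$. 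Granting this, $\pi_{\gamma^*}$ sends each $K\in\mathcal{D}$ to a proper subcontinuum of the metric pseudoarc $X_{\gamma^*}$ containing $q_{\gamma^*}$; the composant of $q_{\gamma^*}$ there is an increasing union $\bigcup_{n<\omega}D_n$ of proper subcontinua which, as is standard for metric indecomposable continua, may be taken cofinal among the proper subcontinua through $q_{\gamma^*}$. I would then set $Y_n=\overline{\bigcup\{K\in\mathcal{D}:\pi_{\gamma^*}(K)\subseteq D_n\}}$. Each $Y_n$ is a proper subcontinuum through $q$ (its $\pi_{\gamma^*}$-image lies in the proper closed set $D_n$, so $Y_n\neq X$, and it is connected as the closure of a directed union of continua through $q$), hence $Y_n\subseteq C_1$; the $Y_n$ increase with $n$; and $\bigcup_{n<\omega}Y_n=\bigcup\mathcal{D}=C_1$, because cofinality of the $D_n$ gives, for every $K\in\mathcal{D}$, some $n$ with $\pi_{\gamma^*}(K)\subseteq D_n$. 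This exhibits $C_1$ as the union of an increasing collection $\pair{Y_n:n<\omega}$ of continua of order type $\pair{\omega,<}$.

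The main obstacle is the uniform bound $\gamma^*$; this is where Smith's construction, rather than any soft hyperspace argument, is needed—indeed an increasing $\omega_1$-chain in $\mathcal{D}$ would only have a dense proper subset of $X$ as its union, so no contradiction is available abstractly. Here I would exploit conditions (ii) and (iv) of the construction. If $K\in\mathcal{D}$ surjects onto $X_{\gamma'}$ but not onto $X_{\gamma'+1}$, then because $f_{\gamma'}^{\gamma'+1}$ fixes $X_{\gamma'}$ and maps $X_{\gamma'+1}\setminus X_{\gamma'}$ into the composant $K_{\gamma'}$, every point of the dense set $X_{\gamma'}\setminus K_{\gamma'}$ has a singleton fibre; surjectivity then forces $\pi_{\gamma'+1}(K)\supseteq X_{\gamma'}$. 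Feeding this back—together with the disjointness $K_\beta\cap X_{\gamma'}=\emptyset$ from (ii) and the fact that the thread of $q$ leaves the tower $\{X_\alpha\}$ cofinally—one controls the stages at which such a surjection can occur and extracts the bound. Making this final step precise is the technical heart, and for it I would appeal to the detailed analysis of the second composant in \cite{smith1}; the remaining ingredients (the thread description of $C_0$, the reduction to $\mathcal{D}$, and the metric composant facts) are routine.
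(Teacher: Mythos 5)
Your proof takes a genuinely different, top-down route from the paper's. The paper works bottom-up: it takes a countable increasing decomposition of the composant $K_0$ of the metric pseudoarc $X_0$ into continua $M(n,0)$, lifts each one through the inverse sequence by taking components of preimages at successor steps (using confluence, Lemma \ref{confluenttohi}, to keep the images correct) and intersections of preimages at limit steps, and then at the top uses a pigeonhole argument on an uncountable set of indices to show that every $x\in C_1$ lies in some $M(k,\omega_1)$; the composant structure of $C_1$ comes out as a by-product. You instead take Smith's theorem that $C_1$ is the composant of a point $q$ as a black box, project the family $\mathcal{D}$ of proper subcontinua through $q$ down to a single metric stage, and pull back a cofinal increasing $\omega$-sequence for the composant of the projected point there. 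Your verifications that each $Y_n$ is a proper subcontinuum through $q$ and that $\bigcup_{n<\omega}Y_n=C_1$ are correct, as is the standard metric fact that the $D_n$ can be chosen increasing and cofinal (unions of two proper subcontinua through a point stay proper by indecomposability).

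The one step you leave open --- the uniform bound $\gamma^*$ --- is not the technical heart you make it out to be: $\gamma^*=0$ works, immediately, from the description of $C_1$ that the construction provides and that the paper records, namely $C_1=\{x\in X:\forall\alpha<\omega_1\ (x_\alpha\in K_\alpha)\}$. Indeed, every $K\in\mathcal{D}$ is contained in the composant $C_1$ of $q$, hence $\pi_0[K]\subseteq K_0$, and $K_0$ is a composant of the indecomposable continuum $X_0$, hence a proper dense subset; so no member of $\mathcal{D}$ surjects onto $X_0$. With this observation your sketched analysis of the stages at which a surjection onto some $X_{\gamma'}$ could occur, via conditions (ii) and (iv), becomes unnecessary and your argument closes. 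The trade-off between the two proofs is that yours leans entirely on the (nontrivial) fact that $X\setminus\bigcup_{\alpha<\omega_1}X_\alpha$ is a single composant with the displayed thread description, while the paper's lifting argument uses only the stagewise equalities $K_\alpha=\bigcup_n M(n,\alpha)$ and establishes the global composant structure along the way; in exchange, your route avoids the transfinite recursion and the final pigeonhole step.
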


The result above was quite unexpected for the author of this note and shows how this metric ``anomally'' is transfered to the composant that we cannot control due to the limiting process. In what follows, we shall sketch Smith's proof of this beatiful fact; the details can be checked in \cite{smith1}.

Notice that by the choice of the composants $K_\alpha$, the following equality holds.
$$
C_1=\{x\in X:\forall \alpha<\omega_1\ (x_\alpha\in K_\alpha)\}.
$$

Fix some $q\in C_1$. In a metrizable continuum, every composant is the union of a countable collection of continua (\cite[11.14]{nadler}). Thus, there exists $\{M(0,n):n<\omega\}\subset\C{X_0}$ such that $q_0\in M(0,0)$, $M(0,n)\subset M(0,n+1)$ for all $n<\omega$ and $K_0=\bigcup\{M(0,n):n<\omega\}$.

Now fix $n<\omega$, recursively construct a sequence of continua $\{M(n,\alpha):\alpha\leq\omega_1\}$ in the following way.
\begin{itemize}
\item If $\alpha<\omega_1$, let $M(n,\alpha+1)$ be the component of $(f_\alpha\sp{\alpha+1})\sp\leftarrow[M(n,\alpha)]$ that contains $q_{\alpha+1}$.
\item If $\gamma\leq\omega_1$ is a limit ordinal, then $M(n,\gamma)=\bigcap\{(f_\alpha\sp\gamma)\sp\leftarrow[M(n,\alpha)]:\alpha<\gamma\}$.
\end{itemize}

By Lemma \ref{confluenttohi}, $f_\alpha\sp{\alpha+1}$ is confluent so $f_\alpha\sp{\alpha+1}[M(n,\alpha+1)]=M(n,\alpha)$ for each $\alpha<\omega_1$. Moreover, when $\gamma\leq\omega_1$ is a limit ordinal, by the properties of inverse limits, $p_\gamma\in M(n,\gamma)$ and $M(n,\gamma)$ is a continuum (see \cite[1.2.4]{chigogidze} and \cite[6.1.20]{eng}).

By induction, using the properties of inverse limits, it is possible to prove that $K_\alpha=\bigcup\{M(n,\alpha):n<\omega\}$ for each $\alpha<\omega_1$. The last step is proving that $C_1=\bigcup\{M(n,\omega_1):n<\omega\}$. Notice that in the countable steps, something like this is not possible because pseudoarcs have $\mathfrak{c}$ composants.

So let $x\in C_1$. For each $\alpha<\omega_1$, $x(\alpha)\in K_\alpha$ so there is $k(\alpha)<\omega$ such that $x(\alpha)\in M(k(\alpha),\alpha)$. So there exists some uncountable $S\subset\omega_1$ and $k<\omega$ such that $k(\alpha)=k$ for each $\alpha\in S$. But every uncountable subset of $\omega_1$ is cofinal in $\omega_1$ so by the recursive definition of the sets $M(k,\alpha)$ it is not hard to see that $S=\omega_1$. In this same way, it is possible to show that $x\in M(k,\omega_1)$.

This completes the fact that $C_1$ is another composant which is the increasing union of a countable collection of continua. Then there exists a long order arc $\A_1$ that passes through all these continua.

\begin{claim}\label{hardorderarc}
The collection $\{M(n,\omega_1):n<\omega\}$ is a cofinal sequence of order type $\pair{\omega,<}$ in the long order arc $\pair{\A_1,\subset}$.
\end{claim}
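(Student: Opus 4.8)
The goal is to show that $\{M(n,\omega_1):n<\omega\}$ sits inside $\A_1$ as a cofinal $\omega$-sequence. The overall strategy mirrors Claim \ref{easyorderarc}: first verify that the $M(n,\omega_1)$ form an increasing chain of subcontinua lying on a single long order arc, and then show this chain is cofinal in that arc by a closure argument at the limit ordinal $\omega_1$.

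**Key steps.** First I would check that $\{M(n,\omega_1):n<\omega\}$ is genuinely an increasing sequence of subcontinua of $X$: each $M(n,\omega_1)$ is a continuum by the remarks preceding the claim (limit steps of the inverse-limit construction preserve continua and contain $q$), and monotonicity $M(n,\omega_1)\subset M(n+1,\omega_1)$ follows by projecting through the $\pi_\alpha$ and using $M(n,\alpha)\subset M(n+1,\alpha)$, which itself descends from the base case $M(0,n)\subset M(0,n+1)$ through the recursive definition. Since $X$ is hereditarily indecomposable, the $M(n,\omega_1)$ are pairwise comparable, hence lie on a unique order arc; because $q\in M(0,\omega_1)$ and $C_1=\bigcup\{M(n,\omega_1):n<\omega\}$ is dense-in-itself with closure $X$, one extends this to a long order arc $\A_1$ from $\{q\}$ to $X$ passing through every $M(n,\omega_1)$, and by unique arcwise connectedness of $\C{X}$ this arc is forced. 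This establishes that $\{M(n,\omega_1):n<\omega\}$ is a subset of $\A_1$ of order type at most $\pair{\omega,<}$.

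For cofinality, I would argue that the union $\bigcup\{M(n,\omega_1):n<\omega\}=C_1$ is dense in $X=\max\A_1$, so any element $D\in\A_1$ with $D\neq X$ must satisfy $D\subset M(n,\omega_1)$ for some $n$. Concretely, if $D\in\A_1$ properly contains some $M(n,\omega_1)$ for all $n$, then $D$ contains $C_1$ and hence its closure $X$, forcing $D=X$; conversely any proper $D$ is comparable to each $M(n,\omega_1)$ and cannot contain all of them, so it is contained in one. This shows the sequence is cofinal. The fact that the order type is exactly $\pair{\omega,<}$ rather than finite follows because the $M(n,\omega_1)$ are strictly increasing: strictness can be read off at the base level, where $M(0,n)\subsetneq M(0,n+1)$ since $K_0$ is not a single continuum (a pseudoarc composant is a strictly increasing countable union), and strictness is preserved upward since $f_\alpha\sp{\alpha+1}[M(n,\alpha+1)]=M(n,\alpha)$ by confluence.

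**Main obstacle.** The delicate point is the cofinality argument—specifically, ruling out that $\A_1$ contains subcontinua strictly between the $M(n,\omega_1)$ and $X$ that are not dominated by any $M(n,\omega_1)$. This requires knowing that $C_1=\bigcup\{M(n,\omega_1):n<\omega\}$ is exactly the composant (already established before the claim) together with the topological fact that the composant is dense in $X$; the density, in turn, rests on $X$ being hereditarily indecomposable so that the closure of any proper subcontinuum is the whole space only when the subcontinuum exhausts a composant. I expect the bookkeeping needed to confirm $\overline{C_1}=X$ and to convert density into the containment $D\subset M(n,\omega_1)$ to be the step demanding the most care.
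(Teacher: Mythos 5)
Your argument is correct and follows exactly the route the paper leaves implicit for this claim (mirroring Claim \ref{easyorderarc}): the $M(n,\omega_1)$ form a strictly increasing chain on $\A_1$, and since their union is the composant $C_1$, whose closure is $X$, any $D\in\A_1$ other than $X=\max\A_1$ fails to contain some $M(n,\omega_1)$ and hence, by comparability within the order arc, is contained in it. One small correction: the density of a composant in a non-degenerate continuum does not rest on hereditary indecomposability (it holds in any continuum, since the composant is a union of proper subcontinua through a point whose closure must be all of $X$), and your phrase ``dense-in-itself'' should simply read ``dense in $X$.''
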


A linearly ordered set cannot have a cofinal sequence of order type $\omega_1$ and another of order type $\omega$. So by Claims \ref{easyorderarc} and \ref{hardorderarc}, the long order arcs $\A_0$ and $\A_1$ are not order isomorphic. Thus, there is no Whitney map on $\C{X}$.

\end{document}